\newtheorem{theorem}{Theorem}
\newtheorem{lemma}{Lemma}[section]
\theoremstyle{remark}
\numberwithin{equation}{section}
\newcommand{\gam}{\gamma}
\newcommand{\eps}{\varepsilon}
\newcommand{\tet}{\theta}
\begin{document}

\title[The Waring--Goldbach problem for large powers]{On the Waring--Goldbach 
problem\\ for seventh and higher powers}
\author[A.V. Kumchev]{Angel V. Kumchev}
\address{Department of Mathematics\\ Towson University\\ Towson, MD 21252\\ USA}
\email{akumchev@towson.edu}
\author[T.D. Wooley]{Trevor D. Wooley}
\address{School of Mathematics\\ University of Bristol\\ University Walk\\ 
Bristol~BS8~1TW\\ UK}
\email{matdw@bristol.ac.uk}
\subjclass[2000]{11P32, 11L20, 11P05, 11P55.}

\begin{abstract} We apply recent progress on Vinogradov's mean value theorem to 
improve bounds for the function $H(k)$ in the Waring--Goldbach problem. We 
obtain new results for all exponents $k\ge 7$, and in particular establish that for large $k$ 
one has
$$H(k)\le (4k-2)\log k-(2\log 2-1)k-3.$$
\end{abstract}
\maketitle

\section{Introduction}In our recent work \cite{KW2016}, we reported on the consequences 
for the Waring-Goldbach problem of recent progress on Vinogradov's mean value theorem 
based on efficient congruencing (see, for example, \cite{Woo2012, Woo2015}). We now 
revisit our analysis in order to incorporate the latest developments stemming from work of 
Bourgain, Demeter and Guth \cite{BDG2015}. We first recall the definition of the function 
$H(k)$ associated with the Waring-Goldbach problem. Consider a natural number $k$ and 
prime number $p$, and define $\tet=\tet(k,p)$ to be the integer with $p^\tet|k$ but 
$p^{\tet+1}\nmid k$, and $\gam=\gam(k,p)$ by
$$\gam(k,p)=\begin{cases} \tet+2,&\text{when $p=2$ and $\tet>0$,}\\
\tet+1,&\text{otherwise.}
\end{cases}$$
We then put $K(k)=\prod_{(p-1)|k}p^\gam$, and denote by $H(k)$ the least integer $s$ 
such that every sufficiently large positive integer congruent to $s$ modulo $K(k)$ may be 
written as the sum of $s$ $k$-th powers of prime numbers.\par

Improving on the bound $H(k)\le k(4\log k+2\log\log k+O(1))$, as $k\to \infty$, due to 
Hua \cite{Hua59, Hua65}, we recently showed that $H(k)\le (4k-2)\log k+k-7$. The 
improved bound that we now present in this note saves roughly $(2\log 2)k$ further 
variables.

\begin{theorem}\label{th1}
When $k$ is large, one has $H(k)\le (4k-2)\log k-(2\log 2-1)k-3$.
\end{theorem}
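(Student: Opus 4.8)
The plan is to attack Theorem~\ref{th1} by the Hardy--Littlewood circle method, following the broad strategy of \cite{KW2016} but feeding in the sharp form of Vinogradov's mean value theorem now available through \cite{BDG2015}. Write $\mathcal L=\log P$, and for a large parameter $P$ put
$$f(\alpha)=\sum_{P<p\le 2P}(\log p)\,e(\alpha p^k),$$
so that the number of representations of $n$ as a sum of $s$ $k$-th powers of primes $p\in(P,2P]$, weighted by $\prod\log p_i$, is
$$R_s(n)=\int_0^1 f(\alpha)^s e(-n\alpha)\,d\alpha.$$
Choosing $n\asymp sP^k$ with $n\equiv s\pmod{K(k)}$, I would dissect $[0,1)$ into major arcs $\mathfrak M$, comprising $\alpha$ close to rationals $a/q$ with $q\le P^\nu$ for a suitable small $\nu>0$, and the complementary minor arcs $\mathfrak m$. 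The aim is to show $R_s(n)>0$ for all large admissible $n$ once $s$ attains the value asserted in the theorem.

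On the major arcs the standard apparatus yields
$$\int_{\mathfrak M}f(\alpha)^s e(-n\alpha)\,d\alpha=\mathfrak S(n)\mathfrak J(n)+o(P^{s-k}),$$
where the singular integral satisfies $\mathfrak J(n)\gg P^{s-k}$ and the singular series $\mathfrak S(n)$ is bounded below by a positive constant precisely when $n$ lies in the residue class modulo $K(k)$ prescribed in the definition of $H(k)$; this is exactly the role of the local factors $p^{\gam}$ entering $K(k)$. The main term therefore dominates, of size $\gg P^{s-k}\mathcal L^{-s}$, and the whole problem reduces to showing that the minor arc contribution is $o(P^{s-k}\mathcal L^{-s})$. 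Since $\int_{\mathfrak m}|f|^s\,d\alpha$ is independent of $n$ and majorises $|\int_{\mathfrak m}f^se(-n\alpha)\,d\alpha|$ uniformly, a single minor arc bound handles every large $n$ at once.

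The heart of the argument is this minor arc bound, and here two ingredients combine. First, a pointwise Weyl-type estimate: applying Vaughan's identity to decompose $f$ into bilinear (Type~I and Type~II) sums and estimating these by the sharp Vinogradov Weyl inequality flowing from \cite{BDG2015}, one obtains
$$\sup_{\alpha\in\mathfrak m}|f(\alpha)|\ll P^{1-\sigma+\eps},$$
with a Weyl exponent $\sigma$ of order $(k^2)^{-1}$, the loss relative to the integer case reflecting the bilinear structure forced by the primes. Second, a mean value estimate
$$\int_0^1|f(\alpha)|^{2t}\,d\alpha\ll P^{2t-k+\Delta(t)+\eps},$$
in which the admissible defect $\Delta(t)\ge0$ is extracted from the sharp bound $J_{s,k}(P)\ll P^{2s-k(k+1)/2+\eps}$ of \cite{BDG2015} by the usual passage from the complete system of $k$ equations to the single $k$-th power equation (and from integers to primes, at the cost of a factor $\mathcal L^{2t}$). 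Inserting both into Hölder's inequality in the shape
$$\int_{\mathfrak m}|f|^s\,d\alpha\le\Bigl(\sup_{\mathfrak m}|f|\Bigr)^{s-2t}\int_0^1|f|^{2t}\,d\alpha\ll P^{s-k+\Delta(t)-\sigma(s-2t)+\eps}$$
renders the minor arcs negligible as soon as $s>2t+\sigma^{-1}\Delta(t)$.

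It remains to optimise the free parameters, and this is where the force of \cite{BDG2015} is felt. One minimises $2t+\sigma^{-1}\Delta(t)$ over the number $2t$ of mean value variables. At the optimum both terms are of order $k\log k$, their sum producing the leading behaviour $s\sim 4k\log k$ common to the present work and to \cite{KW2016}; what changes is the sharper admissible defect $\Delta(t)$, which improves the lower-order contribution of $\sigma^{-1}\Delta(t)$. This second-order optimisation is the decisive and most delicate step: carrying out the extremisation, whose solution involves a factor $\log2$, shows that the saving over the efficient congruencing input of \cite{KW2016} amounts to $2k\log2$ variables, converting the earlier bound $(4k-2)\log k+k-7$ into $(4k-2)\log k-(2\log2-1)k-3$. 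The remainder is relatively routine: pruning $\mathfrak M$ down to the principal arcs, verifying $\mathfrak S(n)\gg1$ under the congruence condition modulo $K(k)$, and tracking the powers of $\mathcal L$ so that the main term genuinely dominates for every sufficiently large admissible $n$.
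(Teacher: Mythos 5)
Your overall circle--method skeleton (major/minor arcs, singular series controlled by $K(k)$, minor arcs via a pointwise prime Weyl bound plus a mean value and H\"older) matches the general framework, but the step that carries all the weight is wrong, and the error is structural, not cosmetic. You take a \emph{single} generating function $f(\alpha)=\sum_{P<p\le 2P}(\log p)e(\alpha p^k)$ with all variables of the same size, and claim a mean value $\int_0^1|f|^{2t}\,d\alpha\ll P^{2t-k+\Delta(t)+\eps}$ with $\Delta(t)$ ``extracted from $J_{s,k}(P)\ll P^{2s-k(k+1)/2+\eps}$ by the usual passage'' from the Vinogradov system to the single equation. That passage consists of summing the system count over the possible values $h_j=\sum_i(x_i^j-y_i^j)$ of the lower--degree power sums, $1\le j\le k-1$, and it costs a factor $P^{1+2+\cdots+(k-1)}=P^{k(k-1)/2}$; what it proves is therefore $\Delta(t)\approx\max\{0,\tfrac12 k(k+1)-t\}$, a defect decaying only \emph{linearly} in $t$. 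Since your minor--arc exponent satisfies $\sigma^{-1}\asymp k^2$, the quantity $2t+\sigma^{-1}\Delta(t)$ is decreasing in $t$ (slope $2-\sigma^{-1}<0$) until $\Delta$ vanishes, so its minimum is attained at $t\approx\tfrac12k(k+1)$ and equals roughly $k(k+1)$: your scheme proves $H(k)\ll k^2$, not $(4k-2)\log k+O(k)$. Your key claim that ``at the optimum both terms are of order $k\log k$'' is unsupported and false in your setup: at $t\asymp k\log k$ your provable defect is $\Delta(t)\approx k^2/2$, so $\sigma^{-1}\Delta(t)\asymp k^4$ and the minor arcs are nowhere near under control. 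To run your argument as written you would need $\Delta(t)=O(k^{-1})$ already at $t\approx 2k\log k$, which amounts to the asymptotic formula in Waring's problem with $O(k\log k)$ variables of equal size --- far beyond what \cite{BDG2015}, or anything else known, delivers.

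The missing idea is the diminishing--ranges structure, which is exactly what the paper uses to manufacture a defect decaying \emph{exponentially} in $t/k$. The mean value actually employed (Lemma \ref{lem2.3.2}) is not a moment of one sum but a product $\int_0^1|g_k(\alpha;N)|^{2w}\prod_j|g_k(\alpha;N^{\lambda_j})|^2\,d\alpha$ over prime Weyl sums whose ranges shrink essentially geometrically, with exponents $\lambda_j$ given by \eqref{lam.1}--\eqref{lam.3} and common ratio $\phi=1-(1-\sigma_{k-1})/k$; consequently the defect satisfies $k-\Lambda\approx k\phi^{t+u}\approx ke^{-(t+u)/k}$, and the residue is absorbed by $2v+h$ full--size sums estimated through Lemma \ref{lem2.1.2}. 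All of this is packaged in Lemma \ref{lemw.1} (that is, \cite[Lemma 4.1]{KW2016} upgraded with the exponent \eqref{2.1} from \cite{BDG2015}), so the paper's proof of Theorem \ref{th1} reduces to an optimisation: take $t+u=\lceil k(2\log k-\log 2)\rceil$, $v=\lfloor(k-\Lambda)/(2\sigma_k)\rfloor$, and balance the marginal cost $2$ of each extra pair of diminishing--range variables against the marginal saving in $2v\approx(k-\Lambda)/\sigma_k\approx k(k-1)(k-\Lambda)$; that balance puts $e^{-(t+u)/k}\asymp k^{-2}$, and it is precisely here that the constants $-2k\log 2$ (from $t+u\approx 2k\log k-k\log 2$) and $+k$ (from $2v\approx k$) in the final bound arise. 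Without an exponentially decaying defect there is no choice of parameters in your scheme that reproduces this, so the $\log 2$ you invoke cannot emerge from your extremisation.
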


For small values of $k$ one has the bounds
\begin{gather*}
H(1)\le 3, \quad H(2)\le 5,\quad H(3)\le 9, \quad H(4) \le 13, \quad H(5) \le 21, \quad H(6) 
\le 32, \quad H(7) \le 46,
\end{gather*}
as a consequence of work of Vinogradov \cite{IVin37}, Hua \cite{Hua38}, Kawada and the 
second author \cite{KaWo01}, the first author \cite{Kumc05}, and Zhao \cite{Zhao14}. For 
larger values of $k$, we recently established that
\begin{gather*}
H(8)\le 61, \quad H(9)\le 75,\quad H(10)\le 89, \quad H(11) \le 103, \quad H(12) \le 117,\\
H(13)\le 131,\quad H(14) \le 147,\quad H(15)\le 163,\quad H(16)\le 178,\\
H(17)\le 194,\quad H(18)\le 211,\quad H(19)\le 227,\quad H(20)\le 244.
\end{gather*}
We now obtain the following bounds for $H(k)$ when $7 \le k \le 20$.

\begin{theorem}\label{th2}
Let $7\le k\le 20$. Then $H(k)\le s(k)$, where $s(k)$ is defined by Table~\ref{tab1}.
\begin{table}[h]
\begin{center}
\begin{tabular}{ccccccccccccccc}
\toprule
$k$ & $7$ & $8$  & $9$  & $10$ & $11$  & $12$  & $13$  & $14$  & $15$  & $16$  & $17$  & 
$18$  & $19$  & $20$ \\
$s(k)$ & $45$ & $57$ & $69$ & $81$ & $93$ & $107$ & $121$ & $134$ & $149$ & $163$ & 
$177$ & $193$ & $207$ & $223$\\
\bottomrule
\end{tabular}\\[6pt]
\end{center}
\caption{Upper bounds for $H(k)$ when $7\le k\le 20$}\label{tab1}
\end{table}
\end{theorem}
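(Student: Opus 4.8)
The plan is to establish each entry $H(k)\le s(k)$ of Table~\ref{tab1} by the Hardy--Littlewood circle method, following the architecture of the proof of Theorem~\ref{th1} and reusing the same auxiliary estimates, but optimising the parameters numerically for each individual $k$. Fixing $k$ and writing $s=s(k)$, I would take $X\asymp n^{1/k}$, set $f(\alpha)=\sum_{X/2<p\le X}(\log p)e(\alpha p^k)$, and express the weighted number of representations of a large $n\equiv s\pmod{K(k)}$ as $\int_0^1 f(\alpha)^s e(-n\alpha)\,d\alpha$. Dissecting the unit interval into major arcs $\mathfrak{M}$ and minor arcs $\mathfrak{m}$, a standard pruning argument on $\mathfrak{M}$, combined with the positivity of the singular series in the prescribed congruence class, delivers a main term of the expected order $X^{s-k}$; this requires only that $s$ exceed a small threshold and is not the binding constraint.

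The crux is to show that $\int_{\mathfrak{m}}|f(\alpha)|^s\,d\alpha$ is of smaller order than this main term. The strategy is to combine a pointwise Weyl-type bound $\sup_{\alpha\in\mathfrak{m}}|f(\alpha)|\ll X^{1-\sigma_k+\eps}$ with a mean value estimate of the shape $\int_0^1|f(\alpha)|^{2t}\,d\alpha\ll X^{2t-k+\eps}$, and to interpolate between them via
\[
\int_{\mathfrak{m}}|f(\alpha)|^s\,d\alpha\le\Bigl(\sup_{\alpha\in\mathfrak{m}}|f(\alpha)|\Bigr)^{s-2t}\int_0^1|f(\alpha)|^{2t}\,d\alpha .
\]
Both ingredients are imported from the lemmas underpinning Theorem~\ref{th1}: the pointwise saving $\sigma_k$ is obtained by applying Vaughan's identity to decompose $f$ into Type~I and Type~II sums and estimating these by the Weyl-type consequences of Vinogradov's mean value theorem now available through \cite{BDG2015}, while the admissible mean value exponent is controlled by passing from the prime sum to the corresponding complete exponential sum and invoking the resolution of the main conjecture in Vinogradov's mean value theorem. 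The least admissible $s$ is then dictated by the interplay between the exponent $\sigma_k$ and the smallest $2t$ for which the mean value estimate is available; it is precisely the improvement of these exponents afforded by \cite{BDG2015} that reduces each $s(k)$ below the values recorded in our earlier work.

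I expect the genuine difficulty to lie not in any new analytic input but in the numerical optimisation itself. For $k$ in this moderate range the classical Hua-type moments, the Weyl-differencing estimates, and the decoupling mean values are all of comparable strength, so the smallest permissible $s$ emerges only after balancing several competing estimates and selecting the dissection parameters, the interpolating moment $2t$, and the precise cut between major and minor arcs on a case-by-case basis. A secondary technical point, absent in the clean asymptotic regime of Theorem~\ref{th1}, is that here the lower-order terms must be tracked explicitly, and one must verify in each individual case that the singular series does not vanish for integers in the residue class $s\pmod{K(k)}$, so that the main term genuinely dominates.
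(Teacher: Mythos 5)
Your overall plan (circle method plus per-$k$ numerical optimisation, reusing the paper's auxiliary lemmas) is in the right spirit, but the analytic engine you propose for the minor arcs is quantitatively too weak to produce any entry of Table~\ref{tab1}. You bound $\int_{\mathfrak{m}}|f|^s\,d\alpha$ by $\bigl(\sup_{\mathfrak{m}}|f|\bigr)^{s-2t}\int_0^1|f|^{2t}\,d\alpha$ using a single complete moment of the full-length prime sum, so you need $\int_0^1|f(\alpha)|^{2t}\,d\alpha\ll X^{2t-k+\eps}$ for some $2t<s(k)$. But even with the Bourgain--Demeter--Guth resolution of the main conjecture in Vinogradov's mean value theorem, such a bound for full-length $k$-th power sums (over integers, hence the best one can import for a subset such as the primes) is only available once $2t$ is of order $k^2$, roughly $2t\ge k(k+1)$, whereas $s(k)\approx 4k\log k$ is smaller than this for every $k$ in the range: for instance $s(7)=45<56=7\cdot 8$ and $s(20)=223<420=20\cdot 21$. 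So there is no admissible choice of $2t$ at all, and no optimisation of the dissection or of the cut between major and minor arcs can repair this; the approach fails outright rather than being merely fiddly. (A secondary inaccuracy: the pointwise saving available for the prime sum here is $\sigma_k/3$, as in Lemma~\ref{lem2.1.2}, not $\sigma_k$.)

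The missing idea is the diminishing-ranges structure. The paper's key mean value, Lemma~\ref{lem2.3.2}, is not a moment of a single sum but a bound for $\int_0^1|g_k(\alpha;N)|^{2w}\prod_{j=1}^{t+u}|g_k(\alpha;N^{\lambda_j})|^2\,d\alpha$, where the exponents $\lambda_j$ of \eqref{lam.1}--\eqref{lam.3} decrease geometrically; because most of the variables are drawn from much shorter intervals $N^{\lambda_j}$, the expected-order bound is attained with only about $4k\log k$ variables in total, which is what makes the values $s(k)$ in the table reachable. Lemma~\ref{lemw.1} (imported from the authors' earlier work) packages this mean value together with the Weyl-type estimate into the clean conclusion $H(k)\le 2(t+u+v)+h$ subject to explicitly checkable hypotheses, and the paper's proof of Theorem~\ref{th2} then consists precisely of choosing $(t_k,u_k,v_k,h_k)$ as in Table~\ref{tab2} and verifying those hypotheses by a computer program, with Table~\ref{tab3} recording the margin in the condition on $\eta^*$. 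Your instinct that the substance of the proof is per-$k$ numerical verification is correct, but that optimisation must take place inside this diminishing-ranges framework, not over the parameters of the homogeneous interpolation you describe.
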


Our proof of Theorems \ref{th1} and \ref{th2} proceeds by directly incorporating the 
refinements available via \cite{BDG2015} into our previous methods from \cite{KW2016}. 
We record improved estimates for Weyl sums in \S2, both pointwise bounds and mean 
value estimates. Then, in \S3, we indicate how to refine our previous bounds for $H(k)$ 
using these bounds, thereby establishing Theorems \ref{th1} and \ref{th2}.\par

Throughout this paper, the letter $\eps$ denotes a sufficiently small positive number. 
Whenever $\eps$ occurs in a statement, we assert that the statement holds for each 
positive $\eps$, and any implied constant in such a statement is allowed to depend on 
$\eps$. The letter $p$, with or without subscripts, is reserved for prime numbers. We also 
write $e(x)$ for $\exp(2\pi \mathrm{i}x)$, and $(a, b)$ for the greatest common divisor 
of $a$ and $b$. Finally, for real numbers $\theta$, we denote by $\lfloor \theta\rfloor$ the 
largest integer not exceeding $\theta$, and by $\lceil \theta\rceil$ the least integer no 
smaller than $\theta$.\par

\section{Auxiliary estimates for exponential sums} We refine the work of 
\cite[\S\S2 and 3]{KW2016} by incorporating recent progress on Vinogradov's mean value 
theorem due to Bourgain, Demeter and Guth \cite{BDG2015}. Recall the classical Weyl sum
\[
f_k(\alpha; X)=\sum_{X < x \le 2X}e\left( \alpha x^k \right),
\]
in which we suppose that $k \ge 2$ is an integer and $\alpha$ is real. When $k \ge 3$ 
is an integer, we define $\sigma_k$ by means of the relation
\begin{equation}\label{2.1}
\sigma_k^{-1} = \min\big\{ 2^{k-1}, k(k-1)\big\}. 
\end{equation}
Also, for $k\ge 3$, we define the multiplicative function $w_k(q)$ by taking
\[
w_k(p^{uk+v})=\begin{cases} kp^{-u-1/2},&\text{when $u\ge 0$ and $v=1$,}\\
p^{-u-1},&\text{when $u\ge 0$ and $2\le v\le k$.}
\end{cases}
\]

\begin{lemma}\label{lem2.1.1}
Suppose that $k\ge 3$. Then either one has $f_k(\alpha; X)\ll X^{1-\sigma_k+\eps}$, or 
there exist integers $a$ and $q$ such that $1\le q\le X^{k\sigma_k}$, $(a, q)=1$ and 
$|q\alpha-a|\le X^{-k+k\sigma_k}$, in which case 
\[
f_k(\alpha;X)\ll \frac {w_k(q)X}{1+X^k|\alpha -a/q|}+X^{1/2+\eps}.
\]
\end{lemma}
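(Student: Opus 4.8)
The plan is to split according to whether $\alpha$ admits a rational approximation of the type demanded by the second alternative, and to treat the two resulting cases by the classical explicit formula for a Weyl sum near a rational and by a sharp minor-arc estimate respectively. Throughout I write $\beta=\alpha-a/q$ and set
\[
S(a,q)=\sum_{r=1}^{q}e(ar^k/q),\qquad v_k(\beta;X)=\int_X^{2X}e(\beta\gam^k)\,\mathrm{d}\gam.
\]
The main ingredients are the classical estimate $q^{-1}|S(a,q)|\ll w_k(q)$ for the complete exponential sum (together with the multiplicative bound $w_k(q)\ll q^{\eps-1/k}$), the standard explicit approximation of $f_k$ near a rational, and the sharp minor-arc Weyl bound that follows from the resolution of the main conjecture in Vinogradov's mean value theorem in \cite{BDG2015}.

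Suppose first that there exist $a,q$ with $1\le q\le X^{k\sigma_k}$, $(a,q)=1$ and $|q\alpha-a|\le X^{-k+k\sigma_k}$; I would then establish the second alternative. The tool is the standard approximation
\[
f_k(\alpha;X)=q^{-1}S(a,q)\,v_k(\beta;X)+O\bigl(q^{1/2+\eps}(1+X^k|\beta|)^{1/2}\bigr).
\]
Since the phase $\beta\gam^k$ has no stationary point on $[X,2X]$, a first-derivative estimate gives $|v_k(\beta;X)|\ll X(1+X^k|\beta|)^{-1}$, so the main term is $\ll w_k(q)X(1+X^k|\beta|)^{-1}$. For the error term one has $qX^k|\beta|=X^k|q\alpha-a|\le X^{k\sigma_k}$ and $q\le X^{k\sigma_k}$, whence $q(1+X^k|\beta|)\ll X^{k\sigma_k}$; as $k\sigma_k\le1$ by \eqref{2.1}, this contributes $\ll X^{1/2+\eps}$, and the second conclusion follows.

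Assume now that no such approximation exists; I would then prove $f_k(\alpha;X)\ll X^{1-\sigma_k+\eps}$. Applying Dirichlet's theorem at level $X^{k-1}$ furnishes coprime $a,q$ with $1\le q\le X^{k-1}$ and $|q\alpha-a|\le X^{1-k}$, so that $|\beta|\le q^{-2}$. If $q\ge X$ I would invoke the sharp Weyl estimate
\[
f_k(\alpha;X)\ll X^{1+\eps}\bigl(q^{-1}+X^{-1}+qX^{-k}\bigr)^{\sigma_k},
\]
whose bracket is $\ll X^{-1}$ throughout $X\le q\le X^{k-1}$, yielding $X^{1-\sigma_k+\eps}$. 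If instead $q<X$ I would return to the explicit formula: here $q(1+X^k|\beta|)\le q+X^k|q\alpha-a|\le 2X$, so the error is again $\ll X^{1/2+\eps}$, while the main term is bounded using $w_k(q)\ll q^{\eps-1/k}$. When $q>X^{k\sigma_k}$ one gets $w_k(q)X\ll X^{1-\sigma_k+\eps}$ at once; when $q\le X^{k\sigma_k}$, the failure of the major-arc hypothesis forces $|q\alpha-a|>X^{-k+k\sigma_k}$, hence $X^k|\beta|>q^{-1}X^{k\sigma_k}$, and the main term is $\ll w_k(q)\,q\,X^{1-k\sigma_k}\ll q^{1-1/k+\eps}X^{1-k\sigma_k}\ll X^{1-\sigma_k+\eps}$.

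The deep content---the Weyl bound carrying the exponent $\sigma_k$---is imported wholesale from \cite{BDG2015}, so the substance of the proof lies in the bookkeeping that matches up the various ranges of $q$. I expect the main obstacle to be exactly the small-$q$ portion of the minor-arc case, where the clean Weyl bound is unavailable and one must instead combine the explicit formula, the decay of $w_k(q)$, and the lower bound for $|\alpha-a/q|$ extracted from the failure of the major-arc hypothesis. Verifying that all of these disparate estimates collapse to the single exponent $1-\sigma_k$, uniformly in $q$, is the step demanding the most care.
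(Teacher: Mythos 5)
Your proposal is correct, and it is essentially the argument that the paper invokes by citation: the paper's proof simply defers to \cite[Lemma 2.1]{KW2016} (which combines Dirichlet's theorem, Vaughan's explicit major-arc formula with the bounds $q^{-1}S(a,q)\ll w_k(q)$ and $v_k(\beta;X)\ll X(1+X^k|\beta|)^{-1}$, and the Weyl-type estimate from \cite{Woo2015} sharpened via \cite{BDG2015} to give the exponent $\sigma_k$ of (\ref{2.1})), and your case analysis---major-arc approximation via the explicit formula, then Dirichlet at level $X^{k-1}$ with the splits $q\ge X$, $X^{k\sigma_k}<q<X$, and $q\le X^{k\sigma_k}$ with $|q\alpha-a|>X^{-k+k\sigma_k}$---is exactly that argument, carried out correctly.
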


\begin{proof} One may apply the argument of the proof of \cite[Lemma 2.1]{KW2016}, 
noting only that the refinement of \cite[Theorem 11.1]{Woo2015} that follows by 
employing the bounds recorded in \cite[Theorem 1.1]{BDG2015} permits the use of the 
exponent $\sigma_k$ with the revised definition (\ref{2.1}) presented above.
\end{proof}

We also require upper bounds for the corresponding Weyl sum over prime numbers,
\[
g_k(\alpha;X)=\sum_{X <p\le 2X}e\left( \alpha p^k \right),
\] 
and these we summarise in the next lemma.

\begin{lemma}\label{lem2.1.2}
Suppose that $k \ge 4$ and $X^{2\sigma_k/3} \le P \le X^{9/20}$. Then either one has 
the bound $g_k(\alpha; X) \ll X^{1-\sigma_k/3+\eps}$, or else there exist integers $a$ and 
$q$ such that $1\le q\le P$, $(a,q)=1$ and $|q\alpha-a|\le PX^{-k}$, in which case 
\begin{equation}\label{2.1.6}
g_k(\alpha;X)\ll \frac{X^{1+\eps}}{(q+X^k|q\alpha-a|)^{1/2}}.
\end{equation}
\end{lemma}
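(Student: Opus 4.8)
The goal is to prove Lemma 2.1.2, which gives Weyl sum bounds over primes $g_k(\alpha;X)$. The structure mirrors Lemma 2.1.1 (for $f_k$), which we can assume.

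**The standard approach for prime Weyl sums:**

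The classical way to convert Weyl sums to prime Weyl sums uses the Vaughan identity (or Heath-Brown's identity). This decomposes the von Mangoldt-weighted sum into "Type I" and "Type II" sums:
- Type I sums: $\sum_m a_m \sum_n e(\alpha (mn)^k)$ where inner sum is complete
- Type II sums: $\sum_m a_m \sum_n b_n e(\alpha(mn)^k)$ bilinear

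**Key ideas I'd use:**

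1. Apply a combinatorial identity (Vaughan/Heath-Brown) to $g_k$ to reduce to Type I and Type II sums.

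2. For each bilinear piece, relate back to $f_k$-type bounds via Lemma 2.1.1.

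3. The parameter constraints $X^{2\sigma_k/3} \le P \le X^{9/20}$ and the $\sigma_k/3$ loss (versus $\sigma_k$ for $f_k$) are characteristic: the factor $1/3$ comes from Vaughan's identity producing bilinear forms with variable ranges, and each application of Weyl differencing/Cauchy-Schwarz to handle these costs a factor.

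4. The $9/20$ exponent and the square-root savings in the minor-arc/major-arc dichotomy need careful handling.

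**The main obstacle:**

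The hard part is managing the Type II sums — getting the exponent $\sigma_k/3$ requires optimizing how Cauchy-Schwarz is applied and carefully tracking when the minor-arc bound versus major-arc structure applies. The range restrictions on $P$ are precisely what make the argument close.

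**Structural note:** This is clearly meant to follow from prior work ([KW2016]) with the new $\sigma_k$. The authors likely cite their earlier lemma and note the improved exponent carries through.

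---The plan is to transfer the classical Weyl estimate of Lemma~\ref{lem2.1.1} to the prime-number setting by means of a combinatorial identity. First I would replace $g_k(\alpha;X)$ by the von Mangoldt weighted sum $\sum_{X<n\le 2X}\Lambda(n)e(\alpha n^k)$; after discarding the negligible contribution of the proper prime powers and applying partial summation, the difference is absorbed into the factor $X^\eps$. I would then invoke Dirichlet's theorem to fix integers $a$ and $q$ with $(a,q)=1$, $1\le q\le X^kP^{-1}$ and $|q\alpha-a|\le PX^{-k}$, so that the dichotomy asserted by the lemma corresponds to whether $q>P$ (the minor arc) or $q\le P$ (the major arc, for which the approximation $a,q$ already satisfies the requirements of the second alternative).

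For the minor arc I would apply Vaughan's identity, decomposing the weighted sum into Type~I sums $\sum_m a_m\sum_n e(\alpha(mn)^k)$, whose inner summation runs over a complete interval, and bilinear Type~II sums $\sum_m a_m\sum_n b_n e(\alpha(mn)^k)$. The Type~I pieces yield to Lemma~\ref{lem2.1.1} applied to the inner sum, together with a routine divisor bound for the coefficients. The crux is the Type~II sum: here I would apply Cauchy--Schwarz in $m$, expand the square, and interchange summation to expose an inner Weyl sum of the shape $\sum_m e(\alpha m^k(n_1^k-n_2^k))$, to which Lemma~\ref{lem2.1.1} again applies. The hypothesis $q>P\ge X^{2\sigma_k/3}$ forces the arguments $\alpha(n_1^k-n_2^k)$ onto the minor arc of that lemma, recovering cancellation; optimising the splitting of the Vaughan ranges then delivers the bound $X^{1-\sigma_k/3+\eps}$. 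The reduction of the exponent from $\sigma_k$ to $\sigma_k/3$ is precisely the price exacted by the Cauchy--Schwarz step and the quadratic spreading of the summation variables.

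In the major arc $q\le P$ I would instead extract a main term. Sorting $n$ into residue classes modulo $q$ and writing $\beta=\alpha-a/q$, the congruence $n^k\equiv r^k\pmod q$ separates the arithmetic factor $e((a/q)r^k)$ from the smooth oscillation $e(\beta n^k)$; evaluating the sum over each progression through the prime number theorem in arithmetic progressions and partial summation produces a complete exponential sum of modulus $O(q^{1/2+\eps})$, divided by $\phi(q)$, times the integral $\int e(\beta t^k)\,\mathrm{d}t=O(X(1+X^k|\beta|)^{-1})$. Since $\phi(q)\gg q^{1-\eps}$ and $q(1+X^k|\beta|)=q+X^k|q\alpha-a|$, this collapses to the bound \eqref{2.1.6}.

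The step I expect to be the genuine obstacle is controlling the error terms uniformly for $q$ as large as $P\le X^{9/20}$: the Siegel--Walfisz theorem is serviceable only for $q$ up to a fixed power of $\log X$, so to reach the exponent $9/20$ one must dispatch these errors by the bilinear (Vaughan) machinery directly on the major arcs, and the restriction $P\le X^{9/20}$ is exactly what balances the Type~I and Type~II ranges in that analysis. As the whole scheme parallels that of \cite[\S\S2 and 3]{KW2016}, I anticipate that the only new ingredient is the sharpened exponent $\sigma_k$ furnished by Lemma~\ref{lem2.1.1}, which propagates through the argument unchanged to yield the stated estimate.
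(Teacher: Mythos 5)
Your proposal is correct and matches the paper's approach: the paper's entire proof is a citation to \cite[Lemma 2.2]{KW2016}, observing that the argument there goes through verbatim with the sharpened exponent $\sigma_k$ from (\ref{2.1}), exactly as your structural note anticipates. Your sketch of the underlying machinery (combinatorial identity, Type I/II sums, bilinear treatment even on the major arcs since $q$ may be as large as $X^{9/20}$, well beyond Siegel--Walfisz range) is also consistent with what that cited proof actually does.
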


\begin{proof} One may follow the argument of the proof of \cite[Lemma 2.2]{KW2016}, 
noting that the refinement to the exponent $\sigma_k$ made available via (\ref{2.1}) as 
exhibited in Lemma \ref{lem2.1.1}.
\end{proof}

In order to describe our critical mean-value estimate, we introduce a set of admissible 
exponents for $k$th powers as follows. Let $t=t_k$ and $u=u_k$ be positive integers to 
be fixed in due course. Put $\theta =1-1/k$, and define 
\begin{equation}\label{lam.1}
\lambda_i=(\theta +\sigma_{k-1}/k)^{i-1}\quad (1\le i\le u+1).
\end{equation}
Then define $\lambda_{u+2}, \dots, \lambda_{u+t}$ by putting
\begin{align}
\lambda_{u+2}&=\frac{k^2-\theta^{t-3}}{k^2+k-k\theta^{t-3}}\lambda_{u+1},
\label{lam.2}\\
\lambda_{u+j}&=\frac {k^2-k-1}{k^2+k-k\theta^{t-3}}\theta^{j-3}\lambda_{u+1}\quad 
(3\le j\le t),\label{lam.3}
\end{align}
and then write
\begin{equation}\label{lam.4}
\Lambda=\lambda_1+\ldots +\lambda_{t+u}.
\end{equation}

\begin{lemma}\label{lem2.3.2}
Let $k$, $t$ and $u$ be positive integers with $k\ge 3$ and 
$t\ge \left\lfloor \frac 12(k+3)\right\rfloor$, and let $w$ be a non-negative integer. Define 
the exponents $\lambda_j$ and $\Lambda$ by means of \eqref{lam.1}-\eqref{lam.4}, and 
put $\eta=\max\{0,k-\Lambda -2w\sigma_k\}$. Then when $N$ is sufficiently large, one 
has
\[
\int_0^1 |g_k(\alpha; N)|^{2w}\prod_{j=1}^{t+u}\left| 
g_k (\alpha;N^{\lambda_j})\right|^2 \, {\rm d}\alpha \ll 
N^{2\Lambda+2w-k+\eta+\eps}.
\]
\end{lemma}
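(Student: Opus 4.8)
The plan is to interpret the integral as a counting function and to apply the Hardy--Littlewood method, separating off the ``core'' mean value carried by the product $\prod_{j=1}^{t+u}|g_k(\alpha;N^{\lambda_j})|^2$, which is where the real difficulty lies. The integral counts the solutions of the single diagonal equation $\sum_{i=1}^{w}(x_i^k-y_i^k)+\sum_{j=1}^{t+u}(u_j^k-v_j^k)=0$ in which $x_i,y_i$ are primes in $(N,2N]$ and $u_j,v_j$ are primes in $(N^{\lambda_j},2N^{\lambda_j}]$. Since the primes in an interval form a subset of the integers there, the count does not decrease when the primality of the $N$-scale variables is dropped, and so one has
\[
\int_0^1|g_k(\alpha;N)|^{2w}\prod_{j=1}^{t+u}|g_k(\alpha;N^{\lambda_j})|^2\,{\rm d}\alpha\le\int_0^1|f_k(\alpha;N)|^{2w}\prod_{j=1}^{t+u}|g_k(\alpha;N^{\lambda_j})|^2\,{\rm d}\alpha.
\]
This frees us to deploy the sharper classical Weyl bound of Lemma~\ref{lem2.1.1}, which carries the full exponent $\sigma_k$, on the $2w$ variables of size $N$; the same device applied to the remaining sums lets us invoke Vinogradov's mean value theorem in its classical (integer) form when treating the product.

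The heart of the argument is the core estimate
\[
\int_0^1\prod_{j=1}^{t+u}|g_k(\alpha;N^{\lambda_j})|^2\,{\rm d}\alpha\ll N^{\Lambda+\eps},
\]
valid so long as $\Lambda<k$, the circle-method value $N^{2\Lambda-k+\eps}$ taking over once $\Lambda\ge k$. This is the diminishing-ranges phenomenon: the scales contract geometrically with ratio $\lambda_{i+1}/\lambda_i=\theta+\sigma_{k-1}/k$, which exceeds the classical diminishing-ranges threshold $\theta=1-1/k$ by the margin $\sigma_{k-1}/k$. That margin is precisely what is bought by the degree-$(k-1)$ Weyl exponent $\sigma_{k-1}$, and hence ultimately by Vinogradov's mean value theorem in the optimal form of \cite{BDG2015}: it licenses a slower decay of the ranges, so that more total length $\Lambda$ is packed into a fixed number of variables, and this is the genuine source of the economy. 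To run the estimate I would set up the standard diminishing-ranges induction, pinning down the largest surviving variable at each generation by marrying the single degree-$k$ equation to the complete auxiliary system of degrees $1,\dots,k-1$. The fine calibration of the final exponents in \eqref{lam.2}--\eqref{lam.3}, together with the hypothesis $t\ge\lfloor\tfrac12(k+3)\rfloor$, is exactly what guarantees that enough generations are present to saturate this auxiliary system, forcing the off-diagonal solutions to be of genuinely lower order. I expect this to be the main obstacle: one must verify that the descent balances at every generation and closes without loss, whereas the surrounding steps are essentially bookkeeping.

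With the core estimate in hand, I would reinstate the $2w$ variables of size $N$ by a dissection of $[0,1)$ into major arcs $\mathfrak M$ and minor arcs $\mathfrak m$. On $\mathfrak m$ the classical Weyl estimate of Lemma~\ref{lem2.1.1} gives $f_k(\alpha;N)\ll N^{1-\sigma_k+\eps}$; extracting the $2w$ factors pointwise and inserting the core estimate for what remains bounds this portion by $N^{\Lambda+2w(1-\sigma_k)+\eps}$. On $\mathfrak M$ the usual evaluation of $f_k$ and of the sums $g_k(\alpha;N^{\lambda_j})$ near rationals of small denominator, followed by pruning, reproduces the main term $N^{2\Lambda+2w-k+\eps}$, to which the truncation of the attendant singular integral contributes a further factor $N^{\eta}$ whenever the variables are deficient. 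Each contribution is therefore $\ll N^{2\Lambda+2w-k+\eta+\eps}$, and on noting the algebraic identity $2\Lambda+2w-k+\eta=\Lambda+2w(1-\sigma_k)$ precisely when $k-\Lambda-2w\sigma_k>0$, one sees that $\eta=\max\{0,k-\Lambda-2w\sigma_k\}$ is exactly the quantity marking the changeover between the diagonal-dominated and the circle-method-dominated regimes, completing the proof.
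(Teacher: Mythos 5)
Your outer frame is partly sound, and one piece of it does match the skeleton of the argument this lemma actually rests on: the paper does not reprove the lemma at all, but quotes \cite[Lemma 3.3]{KW2016} with the improved exponent \eqref{2.1}, and that proof likewise begins with your positivity step, replacing the $2w$ prime variables at scale $N$ by integer variables so that $f_k$ and Lemma \ref{lem2.1.1} may be used. (This step is essentially forced, as you noticed: the prime-sum bound of Lemma \ref{lem2.1.2} saves only $\sigma_k/3$ per factor, while $\eta$ is defined with the full saving $2w\sigma_k$.) The genuine gap is your ``core estimate''
\[
\int_0^1\prod_{j=1}^{t+u}\left|g_k(\alpha;N^{\lambda_j})\right|^2\,{\rm d}\alpha\ll N^{\Lambda+\eps}\qquad(\Lambda<k).
\]
This is not an input that can be taken off the shelf: since $2\Lambda-k+\max\{0,k-\Lambda\}=\max\{\Lambda,2\Lambda-k\}$, it is \emph{exactly the case $w=0$ of the lemma you are proving}. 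Your proposal therefore reduces the lemma to its own hardest special case and then asserts that case, offering only a verbal description of the intended mechanism (``standard diminishing-ranges induction'', ``saturate the auxiliary system''). No descent inequality is written down, and nothing verifies that the specific calibration \eqref{lam.1}--\eqref{lam.3} closes the iteration without loss. Indeed the true argument has two distinct parts, not one induction: the geometric block $\lambda_1,\dots,\lambda_{u+1}$ of ratio $\theta+\sigma_{k-1}/k$ is handled by a Davenport-type differencing step (substitute $x=y+h$ in $x^k-y^k$ and apply the degree-$(k-1)$ Weyl bound to the resulting polynomial in $y$), whereas the tail $\lambda_{u+2},\dots,\lambda_{u+t}$ --- whose peculiar shape in \eqref{lam.2}--\eqref{lam.3}, along with the hypothesis $t\ge\lfloor\frac{1}{2}(k+3)\rfloor$, you leave unexplained --- is treated by a separate mean value resting on Vinogradov's mean value theorem. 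You flag this block as ``the main obstacle'' you expect to overcome; that is an accurate self-assessment, but it means the entire content of the lemma is missing from the proposal.

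There is a second, real gap on the major arcs. Your plan is to sup out the small-scale factors ($\prod_j|g_k(\alpha;N^{\lambda_j})|^2\le N^{2\Lambda}$) and let ``the usual evaluation \ldots followed by pruning'' produce $N^{2\Lambda+2w-k+\eps}$. But integrating $|f_k(\alpha;N)|^{2w}$ alone over the major arcs does not give $N^{2w-k+\eps}$: by Lemma \ref{lem2.1.1} the relevant sum is $N^{2w-k}\sum_{q\le Q}q\,w_k(q)^{2w}$ with $Q=N^{k\sigma_k}$, and since $w_k(q)$ decays only like $q^{-1/k}$ on moduli of the form $q=p^k$, this sum grows like a positive power of $Q$ whenever $2w<k$ (already for $w=1$ the prime moduli alone contribute $\asymp Q$). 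So the naive bound overshoots the target by a power of $N$ unless $\eta$ happens to be large, and one must genuinely prune, playing the $q$-sum against the major-arc behaviour of the smaller-scale sums $g_k(\alpha;N^{\lambda_j})$; your sketch names this step but does not perform it. In short: the positivity reduction and the bookkeeping identity $2\Lambda+2w-k+\eta=\max\{2\Lambda+2w-k,\,\Lambda+2w(1-\sigma_k)\}$ are fine, but both load-bearing analytic estimates --- the $w=0$ mean value and the major-arc bound --- are asserted rather than proved.
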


\begin{proof} This is \cite[Lemma 3.3]{KW2016}, modified to reflect the improved Weyl 
exponent (\ref{2.1}) as exhibited in Lemmata \ref{lem2.1.1} and \ref{lem2.1.2}.
\end{proof}

\section{The upper bound for $H(k)$}
An upper bound for $H(k)$ follows by combining the mean value estimate supplied by 
Lemma \ref{lem2.3.2} with the Weyl-type estimate stemming from Lemma 
\ref{lem2.1.2}.

\begin{lemma}\label{lemw.1}
Let $k$, $t$ and $u$ be positive integers with $k\ge 3$ and 
$t\ge \left\lfloor\frac 12(k+3)\right\rfloor$. Define the exponent $\Lambda$ by means of 
\eqref{lam.4}, and put $v=\lfloor (k-\Lambda)/(2\sigma_k)\rfloor$ and $\eta^*=
k-\Lambda -2v\sigma_k$. Finally, define
\[
h=\begin{cases}1,&\text{when $0\le \eta^*<\tfrac{1}{2}\sigma_k$,}\\
2,&\text{when $\tfrac{1}{2}\sigma_k\le \eta^*<\sigma_k$,}\\
3,&\text{when $\sigma_k\le \eta^*<2\sigma_k$.}\end{cases}
\]
Suppose in addition that $2(t+u+v)+h\ge 3k+1$ and, when $h\in \{1,2\}$, that either 
$v\ge 3$ or $\eta^*<h\sigma_k/3$. Then
\[
H(k)\le 2(t+u+v)+h.
\]
\end{lemma}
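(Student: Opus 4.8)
The plan is to establish the bound by the Hardy--Littlewood circle method, arranged in the diminishing-ranges shape dictated by the exponents $\lambda_j$. Write $s=2(t+u+v)+h$ and, for a large parameter $N$, consider the number $R(n)$ of representations of an integer $n$ with $n\equiv s\pmod{K(k)}$ and $n\asymp N^k$ as $n=\sum_i p_i^k$, in which $2(t+u)$ of the primes lie in the ranges $(N^{\lambda_j},2N^{\lambda_j}]$ (two for each $j$ with $1\le j\le t+u$) and the remaining $2v+h$ primes lie in $(N,2N]$. By orthogonality, $R(n)=\int_0^1\mathcal F(\alpha)e(-n\alpha)\,d\alpha$, where
\[
\mathcal F(\alpha)=g_k(\alpha;N)^{2v+h}\prod_{j=1}^{t+u}g_k(\alpha;N^{\lambda_j})^2 .
\]
I would dissect $[0,1)$ into major arcs $\mathfrak M$ centred on rationals $a/q$ with $q\le N^{\delta}$ and $|\alpha-a/q|\le N^{\delta-k}$, for a suitably small $\delta>0$, together with the complementary minor arcs $\mathfrak m$. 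It then suffices to show that $R(n)>0$ for all large admissible $n$.

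On the major arcs I would run the standard analysis underlying \cite{KW2016}. The hypothesis $2(t+u+v)+h\ge 3k+1$ ensures there are enough variables to prune $\mathfrak M$ and to secure convergence of the singular series $\mathfrak S(n)$ together with the expected value of the singular integral $\mathfrak J(n)$; in particular $\mathfrak S(n)\gg 1$ for $n$ in the prescribed residue class modulo $K(k)$. Since the product of the lengths of the participating variables is $N^{2\Lambda+2v+h}$, with $\Lambda$ given by \eqref{lam.4}, while $n\asymp N^k$, this yields
\[
\int_{\mathfrak M}\mathcal F(\alpha)e(-n\alpha)\,d\alpha\gg \frac{N^{2\Lambda+2v+h-k}}{(\log N)^{s}} .
\]

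The core of the argument is to show that $\int_{\mathfrak m}|\mathcal F(\alpha)|\,d\alpha$ is of smaller order than this main term. The two inputs are the Weyl bound $\sup_{\alpha\in\mathfrak m}|g_k(\alpha;N)|\ll N^{1-\sigma_k/3+\eps}$ from Lemma \ref{lem2.1.2}, and the mean value of Lemma \ref{lem2.3.2}. Distributing the $2v+h$ full-length sums between these two tools, I would treat $2m$ of them by the mean value (taking $w=m$) and bound the rest by the supremum. Taking $w=v$ gives $\eta=\eta^*\in[0,2\sigma_k)$ and
\[
\int_0^1|g_k(\alpha;N)|^{2v}\prod_{j=1}^{t+u}|g_k(\alpha;N^{\lambda_j})|^2\,d\alpha\ll N^{2\Lambda+2v-k+\eta^*+\eps},
\]
so that bounding the surviving $h$ sums by the supremum makes the minor arc integral $\ll N^{h(1-\sigma_k/3)+2\Lambda+2v-k+\eta^*+\eps}$, which beats the main term precisely when $\eta^*<h\sigma_k/3$. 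When $h=3$ I would instead take $w=v+1$: since $\eta^*<2\sigma_k$ the associated $\eta$ vanishes, giving a bound $\ll N^{2\Lambda+2v+2-k+\eps}$ for the corresponding integral, and the one remaining Weyl bound supplies the saving $N^{-\sigma_k/3}$. A short computation with the general split shows that for $h\in\{1,2\}$ no redistribution of the available variables alone can succeed once $\eta^*\ge h\sigma_k/3$.

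The main obstacle is therefore the residual range $h\in\{1,2\}$ with $h\sigma_k/3\le\eta^*<h\sigma_k/2$, where the hypothesis $v\ge 3$ enters. Here I would perform a secondary dissection of $\mathfrak m$ according to the size of the denominator $q$ in the Diophantine approximation furnished by Lemma \ref{lem2.1.2}. On the part of $\mathfrak m$ lying away from rationals of small denominator the crude Weyl bound is comfortably stronger than $N^{1-\sigma_k/3}$, whereas on the neighbourhoods of such rationals I would apply the estimate \eqref{2.1.6} to several of the full-length sums; the surplus of full-length variables guaranteed by $v\ge 3$ is exactly what makes the ensuing sum over $q$, and the integral over each neighbourhood, converge with a genuine power saving. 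Combining the two ranges recovers a bound of size $o\big(N^{2\Lambda+2v+h-k}/(\log N)^s\big)$, whence $R(n)\gg N^{2\Lambda+2v+h-k}/(\log N)^s>0$ and $H(k)\le 2(t+u+v)+h$. Balancing the number of sums treated through \eqref{2.1.6} against those retained for the supremum bound, so as to extract a saving uniformly across the whole interval $\eta^*\in[h\sigma_k/3,\,h\sigma_k/2)$, is the step I expect to demand the most care.
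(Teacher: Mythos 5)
Your skeleton --- circle method, main term of order $N^{2\Lambda+2v+h-k}(\log N)^{-s}$, minor arcs handled by playing the supremum bound of Lemma~\ref{lem2.1.2} against the mean value of Lemma~\ref{lem2.3.2} --- is indeed the shape of the argument behind this lemma, and your treatment of the cases $\eta^*<h\sigma_k/3$ (take $w=v$, estimate $h$ sums by the supremum) and $h=3$ (take $w=v+1$, so that $\eta=0$, with one supremum factor) is correct. Note, though, that the paper itself does not reprove any of this: its ``proof'' is a one-line citation of \cite[Lemma 4.1]{KW2016} with $\sigma_k$ replacing the older exponent, so a blind proof must reconstruct that entire argument, including the alternative hypothesis $v\ge 3$.

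That alternative is precisely where your proposal has a genuine gap. Your plan for the residual range $h\in\{1,2\}$, $\eta^*\ge h\sigma_k/3$ rests on the assertion that on the part of $\mathfrak{m}$ lying away from rationals of small denominator ``the crude Weyl bound is comfortably stronger than $N^{1-\sigma_k/3}$''. Nothing available supports this: Lemma~\ref{lem2.1.2} gives exactly $g_k(\alpha;N)\ll N^{1-\sigma_k/3+\eps}$ for every $\alpha$ possessing no approximation with $q\le P\le N^{9/20}$, with no further gain however remote $\alpha$ is from small denominators, and the bound \eqref{2.1.6} is simply unavailable on that set. So on $\mathfrak{m}_1=\mathfrak{m}\setminus\mathfrak{M}(P)$ the only tools are the capped supremum and Lemma~\ref{lem2.3.2}. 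If you estimate $j$ copies of $g_k(\alpha;N)$ by the supremum ($j\equiv h\pmod 2$) and put the remaining $2w=2v+h-j$ copies into the mean value, the resulting exponent exceeds $2\Lambda+2v+h-k$ by
\[
\max\{0,\eta^*+(j-h)\sigma_k\}-j\sigma_k/3 ,
\]
which is minimized at $j=h$, where it equals $\eta^*-h\sigma_k/3\ge 0$ in the case at hand; the only other option, $h=2$ and $j=0$, gives excess $0$ and is then defeated by the $N^{\eps}$ loss in Lemma~\ref{lem2.3.2} against a main term that is only $N^{2\Lambda+2v+2-k}(\log N)^{-s}$. In other words, on $\mathfrak{m}_1$ your own tools reimpose exactly the condition $\eta^*<h\sigma_k/3$ that this case violates. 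Your escape mechanism --- applying \eqref{2.1.6} to several full-length sums and using $v\ge 3$ to force convergence of the sum over $q$ --- operates only on $\mathfrak{M}(P)\setminus\mathfrak{M}(Q_0)$, where \eqref{2.1.6} is valid (and even there one needs at least five factors of \eqref{2.1.6}, since $\sum_q q^{1-c/2}$ diverges for $c\le 4$); it contributes nothing on $\mathfrak{m}_1$, which is where the obstruction lives. Thus the hypothesis $v\ge 3$ is never brought to bear where it is needed, and the lemma is not established in the one case that goes beyond your ``easy'' analysis; closing it requires input beyond Lemmata~\ref{lem2.1.2} and \ref{lem2.3.2} as you have deployed them, which is exactly what the citation of \cite{KW2016} is hiding.
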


\begin{proof} This is \cite[Lemma 4.1]{KW2016}, modified to reflect the improved Weyl 
exponent (\ref{2.1}) as exhibited in Lemma \ref{lem2.1.1}-\ref{lem2.3.2}.
\end{proof}

We establish Theorems \ref{th1} and \ref{th2} by applying Lemma \ref{lemw.1}. 
Recall (\ref{lam.1})-(\ref{lam.4}), and write $\sigma=\sigma_{k-1}$ and 
$\phi=\theta+\sigma/k$. Then, just as in the discussion of \cite[\S5]{KW2016}, one has
\[
k-\Lambda=-\frac{k\sigma}{1-\sigma}+\left( \frac{k^2(k+1)\sigma +\theta^{t-3}
((k^3-3k^2+k+2)-\sigma(k^3-2k^2+k+2))}{(k^2+k-k\theta^{t-3})(1-\sigma)}\right) 
\phi^u.
\]

\begin{proof}[The proof of Theorem \ref{th2}] Let $k$ be an integer with $7\le k\le 20$, 
and define $t =t_k$, $u=u_k$, $v=v_k$ and $h=h_k$ by means of Table \ref{tab2}. Then 
by application of a simple computer program, one confirms the validity of the hypotheses 
of Lemma \ref{lemw.1}. Thus $H(k)\le 2(t+u+v)+h$. Indeed, with $h_k^*$ defined as in 
Table \ref{tab3}, one finds that for each $k$ one has $2\eta^*/\sigma_k<h_k^*$. We 
note in this context that the entries in this table have been rounded up in the final decimal 
place presented. This completes our proof of Theorem \ref{th2}.

\begin{table}[ht]
\begin{center}
\begin{tabular}{ccccccccccccccc}
\toprule
$k$ & $7$ & $8$ & $9$ & $10$ & $11$ & $12$ & $13$ & $14$ & $15$ & $16$ & $17$ & $18$ & 
$19$ & $20$ \\
\midrule
$t_k$ & $7$ & $12$ & $17$ & $10$ & $13$ & $10$ & $24$ & $19$ & $30$ & $17$ & $25$ & 
$18$ & $29$ & $37$ \\
$u_k$ & $13$ & $12$ & $14$ & $25$ & $29$ & $37$ & $28$ & $42$ & $41$ & $60$ & $56$ & 
$74$ & $66$ & $63$ \\
$v_k$ & $2$ & $4$ & $3$ & $5$ & $4$ & $6$ & $8$ & $5$ & $3$ & $4$ & $7$ & $4$ & $8$ & 
$11$ \\
$h_k$ & $1$ & $1$ & $1$ & $1$ & $1$ & $1$ & $1$ & $2$ & $1$ & $1$ & $1$ & $1$ & $1$ & 
$1$ \\
\bottomrule
\end{tabular}\\[12pt]
\end{center}
\caption{The values of $t_k$, $u_k$, $v_k$ and $h_k$ for $7 \le k \le 20$}\label{tab2}
\end{table}

\begin{table}[ht]
\begin{center}
\begin{tabular}{cccccccccccccc}
\toprule
$k$ & $7$ & $8$ & $9$ & $10$ & $11$ & $12$ & $13$  \\
\midrule
$h_k^*$ & $0.44643$ & $0.22927$ & $0.02678$ & $0.00739$ & $0.97975$ & $0.00042$ & 
$0.08628$ \\
\bottomrule \toprule
$k$ & $14$ & $15$ & $16$ & $17$ & $18$ & $19$ & $20$\\
\midrule
$h_k^*$ & $1.94435$ & $0.03925$ & $0.01091$ & $0.39085$ & $0.00541$ & $0.52855$ & 
$0.00043$\\
\bottomrule
\end{tabular}\\[12pt]
\end{center}
\caption{The values of $h_k^*$ for $7 \le k \le 20$}\label{tab3}
\end{table}

\end{proof}

As we remarked in \cite[\S5]{KW2016}, the non-monotonicity in the values of $t_k$, 
$u_k$ and $v_k$ recorded in Table \ref{tab2} is a consequence of the fact that $\theta$ 
and $\phi$ are close in size, and thus the optimisation is sensitive only to the sum 
$t_k+u_k$ rather than the individual values of $t_k$ and $u_k$.

\begin{proof}[The proof of Theorem \ref{th1}]
We adapt the proof of \cite[Theorem 1]{KW2016}, supposing throughout that $k$ is 
sufficiently large. Put $t=t_k$ and $u=u_k$, where
\[
t_k=\left\lceil \tfrac{1}{2}k\log k\right\rceil \quad \text{and}\quad u_k=
\left\lceil k(2\log k-\log 2)\right\rceil -t.
\]
It is convenient to define $\gam=\lceil k(2\log k-\log 2)\rceil -k(2\log k-\log 2)$. Also, we 
write
\[
\tau=\frac{1}{k(k-1)}\quad \text{and}\quad \sigma=\frac{1}{(k-1)(k-2)},
\]
so that $\sigma_k=\tau$ and $\sigma_{k-1}=\sigma$. Our earlier formula for 
$k-\Lambda$ now takes the shape
\[
k-\Lambda=-\frac{k\sigma}{1-\sigma}+\left( \frac{k^2(k+1)(k-1)^3\sigma +\theta^t
k^3(k^3-3k^2+O(k))}{(k-1)^3(k^2+k-k\theta^{t-3})(1-\sigma)}\right) 
\phi^u.
\]

\par As in the corresponding proof of \cite[Theorem 1]{KW2016}, one finds that
\[
\theta^t=e^{-t/k}\left( 1-\frac{\log k}{4k}+O(k^{-3/2})\right)\asymp k^{-1/2}.
\]
Also, since
\[
\log \phi=\log \left( 1-\frac{1-\sigma}{k}\right)=-\frac{1}{k}-\frac{1}{2k^2}+
O\left(\frac{1}{k^3}\right),
\]
one discerns that
\[
\phi^u=e^{-u/k}\left( 1-\frac{3\log k-2\log 2}{4k}+O(k^{-3/2})\right) \ll k^{-3/2}.
\]
Consequently,
\[
k-\Lambda=-\frac{k\sigma}{1-\sigma}+\left(k-1+O(k^{-1/2})\right)\theta^t\phi^u
+O(k^{-5/2}),
\]
where
\begin{align*}
\theta^t\phi^u&=e^{-(t+u)/k}\left( 1-\frac{2\log k-\log 2}{2k}+O(k^{-3/2})\right)\\
&=e^{-\gamma/k}\left( \frac{2}{k^2}-\frac{2\log k-\log 2}{k^3}+O(k^{-7/2})\right).
\end{align*}

\par Since
\[
\frac{\sigma}{\tau}=\frac{k(k-1)}{(k-1)(k-2)}=1+\frac{2}{k}+O\left( 
\frac{1}{k^2}\right),
\]
we find that
\begin{align*}
\frac{k-\Lambda}{2\tau}&=-\tfrac{1}{2}(k+2)+
\frac{e^{-\gam/k}k(k-1)}{k^3}(k-1)(k-\log k+\tfrac{1}{2}\log 2)+O(k^{-1/2})\\
&=-\tfrac{1}{2}(k+2)+(k-\log k+\tfrac{1}{2}\log 2-2)\left( 1-\gamma/k\right)
+O(k^{-1/2})\\
&=\tfrac{1}{2}k-\log k-3+\tfrac{1}{2}\log 2-\gamma+O(k^{-1/2}).
\end{align*}
Put $v=\left\lfloor(k-\Lambda)/(2\tau)\right\rfloor$, set $\eta^*=k-\Lambda-2v\tau$, and 
define $h$ as in the statement of Lemma \ref{lemw.1}. Then one has 
$0\le \eta^*<2\tau$, and in all circumstances one may confirm that
\[
2v+h=\frac{k-\Lambda-\eta^*}{\tau}+h\le \frac{k-\Lambda}{\tau}+2
\le k-2\log k-4+\log 2-2\gamma+O(k^{-1/2}).
\]
Since
\[
2(t+u+v)+h\le 2(2k\log k-k\log 2+\gamma)+k-2\log k-4+\log 2-2\gamma+O(k^{-1/2}),
\]
we therefore conclude from Lemma \ref{lemw.1} that
\[
H(k)\le (4k-2)\log k-(2\log 2-1)k-4+\log 2+O(k^{-1/2}).
\]
We have assumed $k$ to be sufficiently large, and thus we have established the bound
\[
H(k)\le (4k-2)\log k-(2\log 2-1)k-3.
\]
This completes the proof of Theorem \ref{th1}.
\end{proof}

\providecommand{\bysame}{\leavevmode\hbox to3em{\hrulefill}\thinspace}


\begin{thebibliography}{10}

\bibitem{BDG2015}
J. Bourgain, C. Demeter and L. Guth, \emph{Proof of the main conjecture in Vinogradov's 
mean value theorem for degrees higher than three}, preprint available as 
arXiv:1512.01565.

\bibitem{Hua38}
L.~K. Hua, \emph{Some results in prime number theory}, Quart. J. Math. Oxford Ser. 
\textbf{9} (1938), 68--80.

\bibitem{Hua59}
L.~K. Hua, \emph{Additive Primzahltheorie}, B. G. Teubner Verlagsgesellschaft, Leipzig, 
1959.

\bibitem{Hua65}
L.~K. Hua, \emph{Additive {Theory} of {Prime} {Numbers}}, American Mathematical 
Society, Providence, RI, 1965.

\bibitem{KaWo01}
K.~Kawada and T.~D. Wooley, \emph{On the {Waring--Goldbach} problem for fourth and 
fifth powers}, Proc. London Math. Soc. (3) \textbf{83} (2001), no. 1, 1--50.

\bibitem{Kumc05}
A.~Kumchev, \emph{The Waring--Goldbach problem for seventh powers}, Proc. Amer. 
Math. Soc. \textbf{133} (2005), no. 10, 2927-2937. 

\bibitem{KW2016}
A.~V.~Kumchev and T.~D.~Wooley, \emph{On the Waring-Goldbach problem for eighth 
and higher powers}, J. London Math. Soc. (to appear), preprint available as arXiv:1510.00982. 

\bibitem{IVin37}
I.~M. Vinogradov, \emph{Representation of an odd number as the sum of three primes}, 
Dokl. Akad. Nauk SSSR \textbf{15} (1937), 291--294.
  
\bibitem{Woo2012}
T.~D. Wooley, \emph{Vinogradov's mean value theorem via efficient congruencing}, 
Ann. of Math. (2) \textbf{175} (2012), no. 3, 1575--1627.
  
\bibitem{Woo2015}
T.~D. Wooley, \emph{Multigrade efficient congruencing and Vinogradov's mean value 
theorem}, Proc. London Math. Soc. (3) \textbf{111} (2015), no. 3, 519--560.

\bibitem{Zhao14}
L. Zhao, \emph{On the Waring--Goldbach problem for fourth and sixth powers}, Proc. 
London Math. Soc. (3) \textbf{108} (2014), no. 6, 1593--1622.

\end{thebibliography}
\end{document}